\documentclass[11pt]{article}
\usepackage{amsmath,amsfonts,amssymb,amscd}
\usepackage{graphicx}

\newtheorem{theo}{Theorem}
\newtheorem{lem}{Lemma}[section]
\newtheorem{prop}{Proposition}[section]

\newtheorem{cor}{Corollary}[section]

\newtheorem{dfn}{Definition}[section]

\makeatletter \@addtoreset{equation}{section} \makeatother

\newcommand{\mC}{\mathbb{C}}

\newcommand{\mR}{\mathbb{R}}

\newcommand{\mN}{\mathbb{N}}

\newcommand{\calC}{{\cal C}}

\newcommand{\calF}{{\cal F}}

\newcommand{\calH}{{\cal H}}

\newcommand{\eps}{\varepsilon}
\newcommand{\ph}{\varphi}
\newcommand{\thet}{\vartheta}

\newcommand{\id}{\operatorname{id}}

\newcommand\qed{{\unskip\nobreak\hfil\penalty50
  \hskip2em\hbox{}\nobreak\hfil\mbox{\rule{1ex}{1ex} \qquad}
    \parfillskip=0pt \finalhyphendemerits=0\par\medskip}}

\begin{document}

\title
{Isonormal potentials in one degree of freedom}
\author{ D.~Treschev\\
 Steklov Mathematical Institute of Russian Academy of Sciences
\date{}
}

\maketitle

\begin{abstract}
We consider the Hamiltonian system with one degree of freedom
$$
  \dot x = y, \quad \dot y = -\partial V(x) / \partial x, \qquad x,y\in\mR
$$
with the smooth potential $V:\mR\to\mR$. We assume that the origin is an elliptic singular point: $V'(0)=0$ and $V''(0)>0$. Two potentials $V_0$ and $V_1$ are refered to be isonormal if in a neighborhood of the origin there exists a canonical change of coordinates which transforms the Hamiltonian function $y^2/2 + V_0$ to $y^2/2 + V_1$. In particular, any potential isonormal to $x^2/2$ is isochronous. We obtain a criterium of isonormality for analytic potentials.
\end{abstract}

\section{Introduction}

Let $V:\mR\to\mR$ be a $C^2$-smooth function. Consider the ODE system
\begin{equation}
\label{newton}
  \ddot x = -\partial_x V, \qquad  x\in\mR.
\end{equation}
The function $V$ is called a potential. It is assumed to have $0$ as a point of nondegenerate minimum i.e.,
$V'(0) = 0$, $V''(0) > 0$. The first integral
$$
  y^2 / 2 + V(x) = \mbox{constant}, \quad   y = \dot x.
$$
implies that all solutions of (\ref{newton}) with initial conditions near the origin $(x,y)=(0,0)$ are periodic. If the periods of all these solutions are the same, the system (\ref{newton}) as well as the potential $V$ are called isochronous.

Complete characterization of local isochronous potentials was obtained by efforts of several authors:
\cite{KP, U61, Str, GZ}. Many results (including multidimensional ones) on the subject are contained in the book \cite{Calogero_book}. A series of global analytic examples was obtained in \cite{StSt, BM2003, Dor, ACMP, Mam}.

In \cite{Tre_RCD22, Tre_Proc23} isochronicity problem in a more general context was considered: instead of (\ref{newton}) analytic Hamiltonian systems with one degree of freedom and Hamiltonian
$$
  y^2/2 + x^2/2 + \mbox{higher order terms}
$$
were studied. A criterium of isochronicity was obtained (without a complete proof). The proof is given in \cite{Tre26}.

In \cite{U62, Tre22} an extended version of the problem is proposed: given a function which connects the period of the oscillations with the energy\footnote{equivalently, given a Hamiltonian normal form for a system with one degree of freedom}, find a Hamiltonian (in a certain class of Hamiltonians) which realizes such a connection\footnote{respectively, which has this normal form}. In particular, if this function is a constant\footnote{respectively, if the normal form is trivial}, we have the isochronicity problem. In \cite{Tre_RMS}
we propose an explicit formula which computes normal form for a one degree of freedom Hamiltonian without any normalization procedure. This formula is proven in \cite{Tre26}, where a multidimensional generalizations are also presented.

In this paper we obtain a necessary and sufficient condition for two analytic potentials $V$ in equation (\ref{newton}) to generate the same dependence of the oscillation period on the energy. In particular we reprove the result from \cite{GZ} on the complete description of isochronous potentials.

We study Hamiltonian systems
\begin{equation}
\label{ini}
  \dot x = \partial_y H, \quad
  \dot y = - \partial_x H
\end{equation}
with one degree of freedom on the symplectic plane\footnote
{The problems we consider are local. So, our phase space is in fact a small neighborhood of the origin in $\mR^2$.}
$\mR^2=\{(x,y)\}, dy\wedge dx$ and Hamiltonian
\begin{equation}
\label{ham}
  H = y^2/2 + V(x), \qquad
  V(x) = v^2(x) / 2, \quad v(0) = 0, \quad v'(0) = 1.
\end{equation}
This system is equivalent to (\ref{newton}). Hamiltonian formalism will be important for our argument.
Below all functions and transformations are assumed to be Taylor series at the origin with positive radius of convergence.

\begin{dfn}
Two potentials $V_0$ and $V_1$ will be said to be isonormal if there exists a real-analytic symplectic coordinate change $(B\subset\mR^2,0)\to (\mR^2,0)$ (here $B$ is a neighborhood of the origin) which transforms one Hamiltonian (\ref{ham}) to another. In the other words, potentials are isonormal if the Birkhoff normal forms at the elliptic singular point $0\in\mR^2$ for the corresponding systems coincide.
\end{dfn}

\begin{theo}
\label{theo:iso}
The potentials $V_0 = v_0^2 / 2$ and $V_1 = v_1^2 / 2$ are isonormal if and only if odd parts of the functions\footnote
{$v^{-1}$ denotes the function inverse to $v$ i.e., $v\circ v^{-1}(x) \equiv v^{-1} \circ v(x) \equiv x$.}
$v_1^{-1}$ and $v_2^{-1}$ coincide:
\begin{equation}
\label{vvvv}
  v_0^{-1}(z) - v_0^{-1}(-z) = v_1^{-1}(z) - v_1^{-1}(-z).
\end{equation}
\end{theo}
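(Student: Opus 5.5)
Two Hamiltonians $H=y^2/2+V(x)$ with an elliptic minimum at the origin are symplectically equivalent near the origin exactly when they have the same area function. Let
$$
  A(h)=\text{area of }\{H\le h\}.
$$
For one degree of freedom this holds because action--angle variables $(I,\ph)$ with $I=A(h)/2\pi$ bring $H$ to the form $H=h(I)$. So $H_0$ and $H_1$ are analytically conjugate iff $h_0=h_1$, iff $A_0=A_1$ as functions of $h$. I would therefore reduce Theorem~\ref{theo:iso} to computing $A(h)$ in terms of $v$.

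For this I would use the substitution $z=v(x)$, which is valid since $v(0)=0$ and $v'(0)=1$, so that $v$ is an analytic diffeomorphism near $0$. The sublevel set is
$$
  \{y^2/2+z^2/2\le h\},\qquad x=v^{-1}(z),
$$
that is, $|y|\le\sqrt{2h-z^2}$ with $z\in[-\sqrt{2h},\sqrt{2h}]$. Writing $u=v^{-1}$, we get
$$
  A(h)=\int_{-\sqrt{2h}}^{\sqrt{2h}} 2\sqrt{2h-z^2}\,u'(z)\,dz .
$$
The weight $\sqrt{2h-z^2}$ is even in $z$. Hence only the even part of $u'$ contributes, and that is the derivative of the odd part of $u$:
$$
  u_{odd}(z)=\tfrac12\bigl(u(z)-u(-z)\bigr).
$$
This proves sufficiency at once: if the odd parts of $v_0^{-1}$ and $v_1^{-1}$ agree, then $A_0\equiv A_1$ and the potentials are isonormal.

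For necessity, assume $A_0=A_1$. Let $w=u_0'-u_1'$ and let $w_e$ be its even part, which is analytic near $0$. Then
$$
  \int_{-r}^{r}\sqrt{r^2-z^2}\,w_e(z)\,dz=0\quad\text{for all small } r>0 .
$$
Expand $w_e(z)=\sum_k c_k z^{2k}$. Integrating termwise gives $\sum_k c_k\beta_k r^{2k+2}=0$, where
$$
  \beta_k=\int_{-1}^{1}\sqrt{1-t^2}\,t^{2k}\,dt>0
$$
is a positive Beta-function value. Therefore every $c_k=0$, so $w_e\equiv 0$. Since $u_{0,odd}$ and $u_{1,odd}$ both vanish at $0$, it follows that $u_{0,odd}=u_{1,odd}$, which is (\ref{vvvv}); the statement writes $v_1,v_2$, but $v_0,v_1$ is meant. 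Alternatively, the Abel-type transform above is injective on analytic even functions.

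The main obstacle is the first step: justifying, in the real-analytic category, that equality of the area (equivalently, period) functions implies the existence of an analytic symplectic change of coordinates defined in a full neighborhood of the origin. My plan is to use the Birkhoff normal form in its analytic, convergent form. Since the system is integrable, the Birkhoff transformation converges by R\"ussmann/Vey-type results in one degree of freedom. Alternatively, one can construct action--angle variables directly: the map $(x,y)\mapsto(\sqrt{2I}\cos\ph,\sqrt{2I}\sin\ph)$ is analytic at the origin because $A(h)$ is analytic in $h$ with $A'(0)=2\pi/\sqrt{V''(0)}\ne0$. In particular $A$ is analytic, since $u'$ is analytic and the integral above is a convergent series in $h$. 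The two Hamiltonians then both equal the same function of $I$, and composing one normalizing map with the inverse of the other gives the required conjugacy.
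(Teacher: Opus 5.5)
Your proof is correct, but it does not follow the paper's main argument. It is essentially the quadrature method that the paper only sketches in the introduction via formula~(\ref{quad}), carried out with the area $A(h)$ (whose derivative is the period) in place of the period. That choice makes the inversion of the Abel-type transform immediate: the even kernel $\sqrt{r^2-z^2}$ sees only the even part of $(v^{-1})'$, and the moments $\beta_k$ are positive.

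The paper's own proof stays inside the class $y^2/2+V(x)$. It generates conjugacies as time-one maps of the flow of $F=\sum_k y^{2k+1}f_{2k+1}$, under which $v$ obeys $\partial_\delta v=\ph(v,\delta)\partial_x v$ with $\ph$ even. It proves ``$\Rightarrow$'' by showing that $\psi_v$ is conserved, because $\id-\psi_{v_0}\circ v$ remains an involution. It proves ``$\Leftarrow$'' by explicitly constructing an even $\ph=\thet\circ\psi_{v_0}$ that deforms $v_0$ into $\hat v=\psi_{v_0}^{-1}\circ(2\id)$, with Lemma~\ref{lem:conver} ensuring convergence of $F$.

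The trade-off runs in both directions.
\begin{itemize}
\item Your necessity argument uses only area preservation, so it applies to an arbitrary conjugacy. The paper's flow argument presupposes that the conjugacy passes through Hamiltonians of the form $y^2/2+V(x,\delta)$.
\item Your sufficiency argument depends on an external theorem: analytic action--angle variables at an elliptic point in one degree of freedom (convergence of the Birkhoff normal form, or the isochore Morse lemma of Colin de Verdi\`ere and Vey). The paper's explicit construction avoids this, although its definition of isonormality as ``the Birkhoff normal forms coincide'' takes the theorem for granted.
\end{itemize}

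Rely on that cited theorem rather than on your ``alternative'' justification. Analyticity of $A(h)$ with $A'(0)\ne 0$ does not by itself make the Cartesian action--angle chart analytic at the origin; you would still have to linearize analytically the $2\pi$-periodic flow of $A(H)/2\pi$.

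Finally, your method is intrinsically one-dimensional. The paper chooses the deformation approach precisely because it might extend to several degrees of freedom.
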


\begin{cor}
Any potential $V = v^2/2$  is isonormal to the even potential $V_e = v_e^2 / 2$, where $v_e$ is the odd function, determined by the equation
$$
  v_e^{-1}(z) = \frac12 \big( v^{-1}(z) - v^{-1}(-z) \big).
$$
\end{cor}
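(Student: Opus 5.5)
The plan is to apply Theorem~\ref{theo:iso} with $v_0=v$ and $v_1=v_e$. All that needs checking is that $v_e$ is a legitimate function of the type in (\ref{ham}) and that it satisfies (\ref{vvvv}). No new dynamical argument is needed.

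First I will check that $v_e$ is well defined. Since $v(0)=0$ and $v'(0)=1$, the analytic inverse function theorem gives $v^{-1}$ as a convergent series near $0$ with $v^{-1}(0)=0$ and $(v^{-1})'(0)=1$. Put
$$
  w(z)=\tfrac12\big(v^{-1}(z)-v^{-1}(-z)\big).
$$
This $w$ is analytic near $0$ and odd. It satisfies $w(0)=0$, and $w'(0)=\tfrac12\big((v^{-1})'(0)+(v^{-1})'(0)\big)=1$. Applying the inverse function theorem again, $w$ has an analytic inverse $v_e:=w^{-1}$ near $0$, with $v_e(0)=0$ and $v_e'(0)=1$. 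So $V_e=v_e^2/2$ is a potential of the form (\ref{ham}).

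Next, $v_e$ is odd, because the inverse of an odd function is odd: if $w(-z)=-w(z)$, then $w^{-1}(-u)=-w^{-1}(u)$. Hence $V_e=v_e^2/2$ is even, which justifies the name.

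Finally, since $v_e^{-1}=w$ is odd, we get
$$
  v_e^{-1}(z)-v_e^{-1}(-z)=2w(z)=v^{-1}(z)-v^{-1}(-z).
$$
This is exactly condition (\ref{vvvv}) for the pair $v_0=v$, $v_1=v_e$. Theorem~\ref{theo:iso} therefore says that $V$ and $V_e$ are isonormal.

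There is no real obstacle. The only points needing care are the nondegeneracy $w'(0)=1$, which makes $v_e$ exist as a convergent series, and the normalization $v_e'(0)=1$, which keeps $V_e$ within the class of potentials to which Theorem~\ref{theo:iso} applies.
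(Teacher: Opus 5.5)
Your proposal is correct and matches the paper, which states the corollary as an immediate consequence of Theorem \ref{theo:iso}: $v_e^{-1}$ is odd, so $v_e^{-1}(z)-v_e^{-1}(-z)=2v_e^{-1}(z)=v^{-1}(z)-v^{-1}(-z)$, which is (\ref{vvvv}). Your $v_e$ is also exactly the potential $\hat v=\psi_{v_0}^{-1}\circ(2\id)$ that the paper constructs in its proof of ``$\Leftarrow$'', so the paper in fact builds this isonormality explicitly there.
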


Below we prove (Proposition \ref{prop:uni}) that any analytic in a neighborhood of $0\in\mC$ function $v$ such that $v(0)=0$ and $v'(0)\ne 0$ is uniquely presented in the form $v(x) = \psi^{-1} (x - h(x))$, where $\psi = \psi_v$ is odd and $h = h_v$ is an involution (see (\ref{hpsi})). This implies

\begin{cor}
The potentials $V_0 = v_0^2 / 2$ and $V_1 = v_1^2 / 2$ are isonormal if and only if there exists an odd
$\psi$ and two involutions $h_0, h_1$ such that
$$
  v_0(x) = \psi^{-1} (x - h_0(x)) \quad \mbox{and} \quad
  v_1(x) = \psi^{-1} (x - h_1(x)).
$$
\end{cor}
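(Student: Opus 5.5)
The plan is to derive this Corollary directly from Theorem \ref{theo:iso}, combined with the representation $v(x)=\psi^{-1}(x-h(x))$ of Proposition \ref{prop:uni}. The link between the two is an identity expressing $\psi$ through $v^{-1}$. Namely, I expect that whenever $v(x)=\psi^{-1}(x-h(x))$ with $\psi$ odd and $h$ an involution,
$$
  \psi(z) = v^{-1}(z) - v^{-1}(-z),
$$
so $\psi$ is twice the odd part of $v^{-1}$. Once this is established, both directions of the Corollary are bookkeeping.

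To prove the identity, set $z=v(x)$, so that $\psi(z)=x-h(x)$. Put $x'=h(x)$. Since $h$ is an involution, $x'-h(x')=h(x)-x=-\psi(z)$. Because $\psi$ is odd, this gives
$$
  v(h(x))=\psi^{-1}(-\psi(z))=-z .
$$
Hence $v\circ h=-v$, i.e. $v^{-1}(-z)=h(v^{-1}(z))$. Substituting back,
$$
  v^{-1}(z)-v^{-1}(-z)=x-h(x)=\psi(z).
$$
All of this is an identity of convergent power series near the origin. I will also check that $h(0)=0$, that $h'(0)=-1$ (so that $x-h(x)$ is locally invertible), and that $\psi'(0)\neq 0$, so that the compositions make sense. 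These facts come with the normalization in Proposition \ref{prop:uni}.

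For the "only if" direction, suppose $V_0,V_1$ are isonormal. Theorem \ref{theo:iso} gives equality (\ref{vvvv}) of the odd parts of $v_0^{-1}$ and $v_1^{-1}$. Proposition \ref{prop:uni} gives representations $v_j=\psi_{v_j}^{-1}(x-h_{v_j}(x))$, and by the identity above $\psi_{v_j}(z)=v_j^{-1}(z)-v_j^{-1}(-z)$. Therefore $\psi_{v_0}=\psi_{v_1}=:\psi$, and we take $h_j=h_{v_j}$.

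For the "if" direction, suppose the odd $\psi$ and the involutions $h_0,h_1$ are given. The identity, applied to each $v_j$ separately, gives $v_0^{-1}(z)-v_0^{-1}(-z)=\psi(z)=v_1^{-1}(z)-v_1^{-1}(-z)$. This is exactly (\ref{vvvv}), so Theorem \ref{theo:iso} yields isonormality. The only delicate step is the identity $v\circ h=-v$ and the local invertibility it relies on; everything else is formal.
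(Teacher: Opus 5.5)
Your proof is correct. Its skeleton is the paper's: the Corollary is Theorem~\ref{theo:iso} combined with the fact that any representation $\id-h=\psi\circ v$, with $\psi$ odd and $h\in\calF_{inv}$, forces $\psi=\psi_v$ (and $h=h_v$). You differ in how you obtain that fact.

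The paper invokes Proposition~\ref{prop:uni}. Its proof goes through Lemma~\ref{lem:inv}, which characterizes involutions by the vanishing of the residues $\oint (z-h(z))^{-2k-1}\,dz$. It then rewrites these as conditions on $\psi=\zeta/\ph(\zeta)$ and solves the triangular recursion (\ref{c}) for the coefficients $\ph_{2k}$.

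You instead compose the relation with $h$: $\psi\circ v\circ h=h-\id=-\psi\circ v=\psi\circ(-v)$. Since $\psi'(0)v'(0)=(\id-h)'(0)=2$, $\psi$ is locally injective, so $v\circ h=-v$, i.e.\ $h=h_v$. Then $\psi=\psi_v$ follows by composing (\ref{vhpsi}) with $v^{-1}$.

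This replaces the whole uniqueness argument of Section~\ref{sec:inv} with a few lines, with no contour integrals and no coefficient recursion. Taking $\psi=2\id$, the same computation shows directly that every involution is conjugated to $-\id$ by $v=(\id-h)/2$, which is the implication (A)$\Rightarrow$(C) of Lemma~\ref{lem:inv}. What the paper's route gives beyond yours is only the residue criterion (B) for involutions, and the Corollary does not need it.

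One small remark: in the ``only if'' direction you need nothing beyond Theorem~\ref{theo:iso}. There $\id-h_{v_j}=\psi_{v_j}\circ v_j$ is just (\ref{vhpsi}), and $\psi_{v_j}(z)=v_j^{-1}(z)-v_j^{-1}(-z)$ is the definition (\ref{hpsi}). Your identity does real work only in the ``if'' direction.
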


The potential $V = x^2 / 2$ and potentials isonormal to it are isochronous: in the corresponding system (\ref{ini}) period of oscillations near the origin does not depend on the energy. If $V$ is isochronous, (\ref{vvvv}) implies $v^{-1}(z)-v^{-1}(-z)=2z$. Taking $z=v(\zeta)$, we obtain: $\zeta - h(\zeta) = 2v(\zeta)$, where
$h=v^{-1}\circ(-\id)\circ v$ is an involution: $h(0)=0$, $h'(0)=-1$, $h\circ h(z)=z$.

Such an isochronicity criterium, $v(z) = (z - h(z))/2$ with an involution $h$, was obtained by Gorni and Zampieri \cite{GZ} in the $C^2$-smooth category.

Equation (\ref{vvvv}) can be obtained by using quadratures. Indeed, equation (12,1) from \cite{LL} reads
\begin{equation}
\label{quad}
  x_+(V) - x_-(V) = \frac{1}{\sqrt{2} \pi} \int_0^V \frac{T(E)\, dE}{\sqrt{V - E}}.
\end{equation}
Here $T(E)$ determines period of oscillations as a function of the energy $E = \dot x^2 / 2 + V(x)$. The potential
$V = V(x)$ increases on an interval $(0,x_0)$ and decreases on $(-x_0,0)$, $x_0 > 0$. The corresponding inverse functions are denoted $x_\pm = x_\pm(V)$.

Let $v = v(x)$ be the unique smooth function such that $V = v^2/2$, $v|_{(0,x_0)} > 0$, $v|_{(-x_0,0)} < 0$. We also put $E = \chi^2/2$. Then (\ref{quad}) takes the form
$$
  x(v) - x(-v) = \frac{1}{\pi} \int_0^v \frac{T(\chi^2/2) \chi\, d\chi}{\sqrt{v^2 - \chi^2}}, \qquad
  v > 0,
$$
where $x = x(v)$ is the function inverse to $v = v(x)$. This equation implies that the function
$$
  z \mapsto \psi(z) = v^{-1}(z) - v^{-1}(-z)
$$
is completely determined by the function $T=T(E)$. Hence for isonormal potentials the function $\psi$ is the same.

Below we give another proof of Theorem \ref{theo:iso}. Unlike the method of quadratures our approach has a chance to be extended to multidimensional situation.

\section{Transformations preserving form of Hamiltonian (\ref{ham})}

In this section we determine the class of local symplectic transformations of $(\mR^2,0)$ which preserve the form of the Hamiltonian $H$, changing only the potential $V$. We take these transformations as shifts
$(x,y) = (x(0),y(0))\mapsto (x(\delta),y(\delta))$ along solutions of an auxiliary Hamiltonian system
\begin{equation}
\label{hamF}
  dx/d\delta = \partial F / \partial y, \quad
  dy/d\delta = - \partial F / \partial x, \qquad
  F = F(x,y,\delta).
\end{equation}
Then the Hamiltonian $\calH(x(\delta),y(\delta),\delta) = H(x,y)$ satisfies the equations
\begin{equation}
\label{ave}
  \partial_\delta\calH = \{F,\calH\}, \qquad
  \calH|_{\delta=0} = H,
\end{equation}
where $\{\,,\}$ is the Poisson bracket. The Hamiltonian function has the form\footnote{Here for simplicity of the notation we write $x,y$ instead of $x(\delta),y(\delta)$.}
$$
  \calH(x,y,\delta) = y^2/2 + V(x,\delta)
$$

It is convenient to present the potential $V = V(x,\delta)$ in the form
$$
  V = \frac12 v^2(x,\delta), \qquad  v(0,\delta) = 0, \quad  \partial_x v(0,\delta) = 1.
$$
We expand $F$ in the series
\begin{equation}
\label{F}
  F = \sum_{k=0}^\infty  y^k f_k(x,\delta).
\end{equation}
Then system (\ref{ave}) takes the form
$$
    v\partial_\delta v
  = \sum_{k=0}^\infty \Big( k y^{k-1} f_k v\partial_x v - y^{k+1} \partial_x f_k \Big).
$$
Equating in this equation terms at equal degrees of $y$, we obtain:
\begin{eqnarray}
\nonumber
       \partial_\delta v
  &=& f_1 \partial_x v, \qquad\qquad\qquad\,
       0
 \;=\; 2f_2 v\partial_x v - \partial_x f_0, \\
\label{ave+}
      0
  &=& 3f_3 v\partial_x v - \partial_x f_1, \qquad
      0
 \;=\; 4f_4 v\partial_x v - \partial_x f_2, \\
\nonumber
      0
  &=& 5f_5 v\partial_x v - \partial_x f_3, \qquad
      0
 \;=\; 6f_6 v\partial_x v - \partial_x f_4,\\
\nonumber
      \ldots
  &&  \ldots \qquad\qquad \ldots \qquad
      \ldots \qquad \ldots \qquad\qquad \ldots
\end{eqnarray}
The functions $f_k$ with even $k$ may be put equal to zero. The equations $v = O(x)$, $\partial_x v = O(1)$, $\partial_\delta v = O(x^2)$ imply $f_1 = O(x^2)$.

First, we discuss formal solvability of system (\ref{ave+}). We define the differential operator
$$
  D = \frac{1}{v\partial_x v} \partial_x.
$$
Then for any $k\in\mN = \{1,2,\ldots\}$
\begin{equation}
\label{f_2k+1}
  f_{2k+1} = \frac1{2k+1} D f_{2k-1} = \frac1{(2k+1)!!} D^k f_1.
\end{equation}
To avoid singularities at zero for functions $f_{2k+1}$, we have to require $\partial_x|_{x=0} D^k f_1 = 0$. This system of conditions can be written in a more convenient form if $f_1$ is assumed to be a function of $v$ and $\delta$. Then $D = \frac1v \partial_v$ and the conditions become
\begin{equation}
\label{partialf}
  \partial_v \big|_{v=0} \Big(\frac1v \partial_v\Big)^k f_1 = 0, \qquad  k\in\mN
\end{equation}
i.e., $f_1$ (as well as any function $f_{2k+1}$) is even w.r.t. $v$.

The following lemma shows that if $f_1$ is a series with a positive convergence radius then $F$ is a Taylor series,  converging in a neighborhood of the point $0\in\mC^2 = \{(x,y)\}$.

\begin{lem}
\label{lem:conver}
Suppose $f_1 = f_1(v^2)$ satisfies the equation $\max_{|v^2|\le r} |f_1| = M$. Then for any $\eps\in (0,r)$
$$
  \max_{|v^2|\le r-\eps} |f_{2k+1}(v^2)| \le \eps^{-k} M.
$$
\end{lem}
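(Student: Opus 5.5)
Write $u = v^2$, so that $D = \frac1v\partial_v = 2\partial_u$ on functions of $u$ alone. By (\ref{f_2k+1}) we then have
$$
  f_{2k+1}(u) = \frac{2^k}{(2k+1)!!}\, \partial_u^k f_1(u),
$$
and the lemma reduces to a Cauchy estimate for the derivatives of the analytic function $f_1(u)$ on the disc $|u|\le r$. The factor $2^k/(2k+1)!!$ is what absorbs the factorial growth of the derivatives.

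First, fix $u_0$ with $|u_0|\le r-\eps$. The closed disc of radius $\eps$ centred at $u_0$ lies in $\{|u|\le r\}$, where $|f_1|\le M$. Cauchy's inequality then gives
$$
  |\partial_u^k f_1(u_0)| \le k!\, M \eps^{-k}.
$$
Second, it remains to check that
$$
  \frac{2^k k!}{(2k+1)!!} \le 1 .
$$
Since $2^k k! = (2k)!! = 2\cdot4\cdots 2k$ and $(2k+1)!! = 1\cdot3\cdots(2k+1)$, the ratio equals
$$
  \prod_{j=1}^k \frac{2j}{2j+1}\cdot\frac{1}{1},
$$
where the extra factor $1$ pairs with the leading $1$ in $(2k+1)!!$. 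Each factor $\frac{2j}{2j+1}$ is less than $1$, so the ratio is at most $1$. Combining the two steps yields $|f_{2k+1}(u_0)|\le \eps^{-k}M$. Taking the maximum over $|u_0|\le r-\eps$ gives the claim.

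The only point needing care is that $f_1$ is genuinely an analytic function of $u=v^2$ on the whole disc $|u|\le r$. This is guaranteed by the hypothesis, via the evenness condition (\ref{partialf}). It is also what legitimizes replacing $D$ by $2\partial_u$: differentiating in $u$ rather than $v$ avoids any singularity at $v=0$.

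The identification $D=\frac1v\partial_v$ relies on $x\mapsto v(x,\delta)$ being a local analytic diffeomorphism, which holds because $\partial_x v(0,\delta)=1$. So no real obstacle is expected; the main thing to get right is the combinatorial constant comparing $k!\,2^k$ with $(2k+1)!!$.
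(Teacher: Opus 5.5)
Your proof is correct and uses the same ingredients as the paper's. You rewrite $D=\frac1v\partial_v$ as $2\partial_u$ with $u=v^2$, apply a Cauchy estimate to $\partial_u^k f_1$, and absorb $2^k k!$ into $(2k+1)!!$ via $(2k)!!/(2k+1)!!\le 1$.

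The one real difference is the contour. The paper represents $\partial_u^k f_1(u)$ by a Cauchy integral over the fixed circle $|\xi|=r$ and uses only $|\xi-u|\ge\eps$. Since that circle has length $2\pi r$, this yields only $|f_{2k+1}(u)|\le \frac{(2k)!!}{(2k+1)!!}\,\frac{r}{\eps}\,M\eps^{-k}$. Its final step to $M\eps^{-k}$ silently drops the factor $r/\eps$, so that step does not follow as written in general (e.g.\ $k=1$ with $\eps<2r/3$).

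Your circle $|\xi-u_0|=\eps$ gives exactly $k!\,M\eps^{-k}$. So your argument is the one that actually delivers the inequality as stated; for the lemma's purpose (convergence of $F$) either bound would do.

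The only implicit assumption, which the paper shares, is that $f_1$ is holomorphic in $|u|<r$ and continuous up to $|u|=r$. This is what lets Cauchy's inequality apply when your small circle touches the boundary.
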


{\it Proof}. Let $S\subset\mC$ be the positively oriented circle with the center at the origin and radius $r$. We put $u=v^2$. By the Cauchy formula
$$
  f_1(u) = \frac{1}{2\pi i} \int_S \frac{f(\xi)\, d\xi}{\xi-u}.
$$
Then
$$
    f_{2k+1}(u)
  = \frac{2^k}{2\pi i (2k+1)!!} \partial_u^k \int_S \frac{f(\xi)\, d\xi}{\xi-u}
  = \frac{(2k)!!}{2\pi i (2k+1)!!} \int_S \frac{f(\xi)\, d\xi}{(\xi-u)^{k+1}}.
$$
This implies that for any $|u|\le r-\eps$
$$
      |f_{2k+1}(u)|
  \le \frac{(2k)!!}{2\pi (2k+1)!!} \int_S \frac{M\, d\xi}{\eps^{k+1}}
  \le \frac{M}{\eps^k}.
$$
\qed

\section{Space of power series and its subspaces}

Let $\calF$ be the space of formal power series
$$
  v(x) = \sum_{j=1}^\infty v_j x^j, \qquad  v_j\in\mC .
$$
with positive (depending on $v$) radius of convergence. Any $v\in\calF$ determines a function holomorphic in a neighborhood of the point $0\in\mC$. We define $\id\in\calF$, $\id(x) = x$.

The space $\calF$ with the operations $+,-,\cdot$, and multiplication by a complex number is a commutative associative algebra over $\calC$. We also have the composition operation $\circ$ (the substitution of a series into a series) and the operation $v\mapsto v^{-1}$ of taking the inverse element (if $v_1\ne 0$) such that $v\circ v^{-1} = v^{-1}\circ v = \id$.

We will need the following subsets in $\calF$.
\begin{itemize}
\item $\calF_*\subset\calF$, series with $v_1\ne 0$. Note that $(\calF_*,\circ)$ is a group with the unit element $\id$.

\item $\calF_{odd}\subset\calF$, the space of odd series. Hence, $\calF_{odd *} = \calF_{odd}\cap \calF_*$ is a subgroup in $\calF_*$.

\item $\calF_{even}\subset\calF$, the space of even series.

\item $\calF_{inv}\subset\calF$, the set of involutions $h = \sum_{k=1}^\infty h_j x^j$, $h_1 = -1$, $h\circ h = \id$.
\end{itemize}

The simplest involution is $-\id$. For any $v\in\calF_*$ we have the involution $h_v \in \calF_{inv}$ and the series $\psi_v\in\calF_{odd}$
\begin{equation}
\label{hpsi}
  h_v = v^{-1}\circ (-\id)\circ v, \quad
  \psi_v = v^{-1} - v^{-1}\circ (-\id).
\end{equation}
Then we have the identity
\begin{equation}
\label{vhpsi}
  \id - h_v = \psi_v\circ v.
\end{equation}

\begin{prop}
\label{prop:uni}
Suppose $v\in\calF_*$, $h\in\calF_{inv}$, and $\psi\in\calF_{odd}$ satisfy the equation
\begin{equation}
\label{uni}
   \id - h = \psi\circ v.
\end{equation}
Then $h=h_v$ and $\psi = \psi_v$.
\end{prop}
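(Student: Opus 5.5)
Substitute $x = v^{-1}(z)$ in (\ref{uni}) to rewrite it as an identity for $\psi$; oddness of $\psi$ will then force $\psi = \psi_v$, and (\ref{vhpsi}) will give $h = h_v$.

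\emph{Step 1: rewrite the equation.} Put $g = h\circ v^{-1}\in\calF_*$. Composing (\ref{uni}) on the right with $v^{-1}$ gives
$$
  v^{-1} - g = \psi .
$$
Since $h$ is an involution, $g$ satisfies $h\circ g = h\circ h\circ v^{-1} = v^{-1}$.

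\emph{Step 2: use that $\psi$ is odd.} Composing $\id - h = \psi\circ v$ on the right with $h$ gives
$$
  h - \id = \psi\circ v\circ h .
$$
Adding this to (\ref{uni}) yields $\psi\circ v + \psi\circ v\circ h = 0$. Because $\psi$ is odd, this reads
$$
  \psi\circ v\circ h = \psi\circ(-\id)\circ v .
$$

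\emph{Step 3: cancel $\psi$.} We have $\psi_1 \neq 0$: comparing linear terms in (\ref{uni}) gives $2 = \psi_1 v_1$, since $h_1 = -1$. Hence $\psi\in\calF_*$ is invertible under composition, and Step 2 gives $v\circ h = (-\id)\circ v$. Therefore
$$
  h = v^{-1}\circ(-\id)\circ v = h_v .
$$

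\emph{Step 4: recover $\psi$.} Now (\ref{uni}) and (\ref{vhpsi}) give $\psi\circ v = \id - h_v = \psi_v\circ v$. Composing on the right with $v^{-1}$ yields $\psi = \psi_v$.

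The only delicate point is the invertibility of $\psi$ in Step 3, which the linear-term comparison settles. All manipulations are compositions within the group $(\calF_*,\circ)$, so convergence is automatic.
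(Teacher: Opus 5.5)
Your proof is correct, and it takes a genuinely different route from the paper's.

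The paper proves uniqueness of $\psi$ analytically, in four steps:
\begin{itemize}
\item By the implication (A)$\Rightarrow$(B) of Lemma~\ref{lem:inv}, the involution condition becomes $\oint dz/(\psi\circ v(z))^{2k+1}=0$.
\item The substitution $\zeta=v(z)$ and the ansatz $\psi(\zeta)=\zeta/\ph(\zeta)$ turn this into the vanishing of the coefficients $c_{2k}$ of $\zeta^{2k}$ in $\ph^{2k+1}w'$.
\item These conditions form a triangular system that determines $\ph_2,\ph_4,\ldots$ recursively, so $\psi$ is unique.
\item Since $\psi_v$ is one solution by (\ref{vhpsi}), $\psi=\psi_v$, and then $h=h_v$.
\end{itemize}

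You instead use the group structure of $(\calF_*,\circ)$. Right-composition with $h$ flips the sign of $\id-h$, and oddness moves that sign inside $\psi$. Cancelling the invertible $\psi$ then gives $v\circ h=(-\id)\circ v$ directly; invertibility holds because $\psi_1=2/v_1$, a fact the paper also records. So you get $h=h_v$ first and $\psi=\psi_v$ second.

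What each route buys:
\begin{itemize}
\item Yours is shorter and needs neither contour integrals nor Lemma~\ref{lem:inv}. It only composes germs of local diffeomorphisms, so it works verbatim for formal series or finitely smooth germs.
\item The paper's is constructive: it gives an explicit recursion for the Taylor coefficients of $\psi$. It also links the proposition to the residue characterization of involutions in Lemma~\ref{lem:inv}.
\end{itemize}

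Your Step~1 is never used afterwards and can be dropped.
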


We prove this proposition in Section \ref{sec:inv}. According to Proposition \ref{prop:uni} the map $v\mapsto (\psi_v,h_v)$ determines a one-to-one correspondence between $\calF_*$ and
$\calF_{odd *}\times\calF_{inv}$.

\section{Evolution of the potential}

Let the function $\ph = \ph(x,\delta)$ be continuous in $\delta$ and lie in $\calF_{even}$ for any fixed $\delta\in [0,\delta_0]$. Consider the PDE (compare with the first equation (\ref{ave+}))
\begin{equation}
\label{pde}
  \partial_\delta v = \ph(v,\delta) \partial_x v,  \qquad
  v|_{\delta=0} = v_0 \in \calF_*.
\end{equation}

\begin{prop}
\label{prop:pde}
There exists a unique formal solution $v = v(x,\delta)$, $\delta\in [0,\delta_0]$. It lies in $\calF_*$ if
$\delta\in [0,\tilde\delta_0]$ for some $0<\tilde\delta_0\le\delta_0$. Moreover, for any
$\delta\in [0,\tilde\delta_0]$
$$
  \psi_{v(\cdot,\delta)} = \psi_{v_0}.
$$
\end{prop}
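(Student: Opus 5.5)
The plan is to pass from $v$ to its inverse function, where the PDE (\ref{pde}) becomes trivial. Put $w(z,\delta) = v^{-1}(z,\delta)$, the inverse of $x\mapsto v(x,\delta)$, so that $v(w(z,\delta),\delta) \equiv z$. Differentiating this identity in $\delta$ gives $\partial_\delta v + \partial_x v\,\partial_\delta w = 0$, with $\partial_\delta v$ and $\partial_x v$ evaluated at $(w(z,\delta),\delta)$. Substituting (\ref{pde}) yields
$$
  \partial_\delta w(z,\delta) = -\ph(z,\delta), \qquad w|_{\delta=0} = v_0^{-1}.
$$
The transport equation is therefore equivalent to an explicit quadrature:
$$
  w(z,\delta) = v_0^{-1}(z) - \int_0^\delta \ph(z,s)\, ds .
$$

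\textbf{Formal existence and uniqueness.} I would first check that any formal solution stays in $\calF_*$, so that the inversion is legitimate. Since $\ph\in\calF_{even}$, it has no constant term, so $\ph(v,\delta) = O(x^2)$. Hence the right-hand side of (\ref{pde}) is $O(x^2)$ and the coefficient $v_1$ does not depend on $\delta$; it equals $v_0'(0)\ne 0$. Equivalently, one can compare coefficients at $x^j$ in (\ref{pde}). This gives a triangular system $\partial_\delta v_j = P_j(v_1,\dots,v_{j-1},\delta)$ with polynomial dependence on lower coefficients and continuous dependence on $\delta$, so it is solved uniquely by successive integration. Either way, formal solutions correspond one-to-one to formal $w$ solving the quadrature above. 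These are unique, and $w$ is given explicitly by the formula, so the formal solution exists and is unique on all of $[0,\delta_0]$.

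\textbf{Convergence.} It remains to show that $v(\cdot,\delta)$ has positive radius of convergence for $\delta\in[0,\tilde\delta_0]$. This is the step that needs care: one needs a bound on $\ph(\cdot,s)$ uniform in $s$ on a common disc $|z|\le r$. I would extract such a bound from the continuity of $\ph$ in $\delta$ (interpreted in a topology on $\calF$ giving locally uniform holomorphy), possibly after shrinking to $[0,\tilde\delta_0]$. Then $w(\cdot,\delta)$ is holomorphic on a fixed disc with $\partial_z w(0,\delta) = 1/v_0'(0)\ne 0$. The holomorphic inverse function theorem, with Cauchy estimates on $\partial_z w$ controlling the radius, then gives $v(\cdot,\delta)=w(\cdot,\delta)^{-1}\in\calF_*$ for all such $\delta$.

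\textbf{Invariance of $\psi$.} By (\ref{hpsi}), $\psi_{v(\cdot,\delta)}(z) = w(z,\delta) - w(-z,\delta)$. Since $\ph(\cdot,s)$ is even, $\int_0^\delta \ph(z,s)\,ds - \int_0^\delta \ph(-z,s)\,ds = 0$. Therefore
$$
  \psi_{v(\cdot,\delta)} = v_0^{-1} - v_0^{-1}\circ(-\id) = \psi_{v_0}.
$$
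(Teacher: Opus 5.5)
Your proof is correct, and it takes a different route from the paper's. The paper conjugates by the fixed odd series, writing $v=\psi_{v_0}^{-1}\circ(\id-h)$, and derives the transport equation (\ref{pde++}) for $h$ with the even coefficient $\Phi=\ph\circ\psi_{v_0}^{-1}$. It then checks that $\partial_\delta(h\circ h)=0$ on involutions, a tangency computation done ``assuming that $h(\cdot,\delta)\in\calF_{inv}$''. Finally it invokes Proposition~\ref{prop:uni}, and hence the residue characterization of involutions in Lemma~\ref{lem:inv}, to turn ``$\psi_{v_0}\circ v(\cdot,\delta)=\id-h(\cdot,\delta)$ with $h$ an involution'' into $\psi_{v(\cdot,\delta)}=\psi_{v_0}$. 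Existence, uniqueness and convergence are simply declared standard there.

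Your passage to $w=v^{-1}$ turns the equation into $\partial_\delta w=-\ph$. This makes the invariance of $\psi_v=w-w\circ(-\id)$ immediate, needs no uniqueness statement for the decomposition, and comes with an actual convergence argument. Your caveat that convergence requires a bound on $\ph(\cdot,s)$ uniform in $s$ on a common disc is a real one: coefficientwise continuity in $\delta$ would not suffice. It is automatic in the paper's application, where $\ph$ does not depend on $\delta$.

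Your quadrature is just the inverse-function form of the characteristic formula (\ref{implicit}), which the paper uses only in its proof of ``$\Leftarrow$''. Incidentally, (\ref{implicit}) as printed solves $\partial_\delta v=-\ph(v)\,\partial_x v$; this sign slip is harmless since $-\ph$ is also even. Your formula also gives more: the flow changes $v^{-1}$ exactly by the even function $-\int_0^\delta\ph\,ds$. So the direction ``$\Leftarrow$'' of Theorem~\ref{theo:iso} follows at once by taking $\ph=v_0^{-1}-v_1^{-1}$ (even by (\ref{vvvv})) and $\delta=1$, with no need to compute $\thet$.

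What the paper's route buys is structure. It presents the flow as a deformation of the involution $h$ with $\psi$ frozen, a formulation the author hopes to extend to several degrees of freedom. Inverting $v$, by contrast, is intrinsically one-dimensional and is essentially the quadrature method the paper wants to move beyond.
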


{\it Proof}. Existence and uniqueness of a solution for the initial value problem (\ref{pde}) is a standard fact from the theory of first order PDE's (see for example, equation (\ref{implicit}) below).

We put $v = \psi_{v_0}^{-1}\circ u$. Then (\ref{pde}) takes the form
\begin{equation}
\label{pde+}
  \partial_\delta u(x,\delta) = \ph\circ\psi_{v_0}^{-1}\circ u(x,\delta) \partial_x u(x,\delta),  \qquad
  u|_{\delta=0} = \id - h_{v_0}.
\end{equation}
By (\ref{vhpsi}) the function $h_{v_0}$ is an involution. We put $u = \id - h$. Hence we have to prove that for any $\delta\in [0,\tilde\delta_0]$ the function $h(\cdot,\delta) =  \id - u(\cdot,\delta)$  lies in $\calF_{inv}$.

Equation (\ref{pde+}) implies
\begin{equation}
\label{pde++}
  -\partial_\delta h = \Phi(\id - h) \big(1 - \partial_x h\big),  \qquad
  h|_{\delta=0} = h_{v_0},
\end{equation}
where $\Phi = \ph\circ\psi_{v_0}^{-1}\in\calF_{even}$.

We differentiate $h\circ h(\cdot,\delta)$ w.r.t. $\delta$ assuming that $h(\cdot,\delta)\in\calF_{inv}$:
\begin{eqnarray*}
      \partial_\delta (h\circ h)
  &=& (\partial_\delta h)\circ h + (\partial_x h)\circ h \cdot \partial_\delta h  \\
  &=& - \Phi(h - h\circ h)\cdot \big( 1 - (\partial_x)\circ h \big)
      - (\partial_x h)\circ h \cdot \Phi(\id - h) \cdot (1 - \partial_x h)  \\
  &=& \Phi(\id - h)\cdot\Big( -1 + (\partial_x h)\circ h - (\partial_x h)\circ h + \partial_x (h\circ h) \Big)
   = 0.
\end{eqnarray*}
Hence $h(\cdot,\delta)$ remains an involution for all $\delta$. \qed

\begin{cor}
The functional $v\mapsto\psi_v$ is a first integral (a constant of motion) for system (\ref{pde}).
\end{cor}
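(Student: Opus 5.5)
The statement is the Corollary: $v\mapsto\psi_v$ is constant along solutions of (\ref{pde}). I will read it off directly from Proposition \ref{prop:pde}, which already gives $\psi_{v(\cdot,\delta)}=\psi_{v_0}$ for all $\delta\in[0,\tilde\delta_0]$. Because (\ref{pde}) is autonomous in the sense that only the initial datum is fixed, any intermediate time can serve as a new initial time. So I will fix $\delta_1\in[0,\tilde\delta_0]$ and view $v(\cdot,\delta)$ for $\delta\ge\delta_1$ as the unique solution of (\ref{pde}) with initial value $v(\cdot,\delta_1)\in\calF_*$ at time $\delta_1$. Here $\ph$ is replaced by the shifted function $\ph(\cdot,\cdot+\delta_1)$, which is still continuous in $\delta$ and even in the first argument.

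Applying Proposition \ref{prop:pde} to this shifted problem gives $\psi_{v(\cdot,\delta)}=\psi_{v(\cdot,\delta_1)}$. Uniqueness of the formal solution guarantees that the shifted solution coincides with the original one. Combined with $\psi_{v(\cdot,\delta_1)}=\psi_{v_0}$, this shows that $\psi$ takes the same value along the whole trajectory, on every subinterval where $v$ stays in $\calF_*$.

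To make the statement infinitesimal, i.e.\ $\partial_\delta\psi_{v(\cdot,\delta)}=0$, I would also give a direct derivation as a check. Set $w=v^{-1}(\cdot,\delta)$. Differentiating $v\circ w=\id$ gives $\partial_\delta w=-\ph(z,\delta)\,\partial_z w$, because $(\partial_\delta v)\circ w=\ph(z)\,(\partial_x v)\circ w$ and $(\partial_x v)\circ w\cdot\partial_z w=1$. Then
$$
\partial_\delta\big(w(z)-w(-z)\big)=-\ph(z)\,\partial_z w(z)+\ph(-z)\,(\partial_z w)(-z)=-\ph(z)\,\partial_z\big(w(z)-w(-z)\big).
$$
The second equality uses the evenness of $\ph$. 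Hence $\psi=w-w\circ(-\id)$ satisfies the linear transport equation $\partial_\delta\psi=-\ph\,\partial_z\psi$. This alone does not force $\psi$ to be constant, and this is the main subtle point: constancy has to come from the involution argument of Proposition \ref{prop:pde}. That argument shows $h$ remains an involution, so by Proposition \ref{prop:uni} the odd factor is still $\psi_{v_0}$.

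The only real care is the domain, namely that $v(\cdot,\delta)$ must remain in $\calF_*$ so that $\psi_v$ is defined. On $[0,\tilde\delta_0]$ this is provided by Proposition \ref{prop:pde}, and I will state the corollary on that interval.
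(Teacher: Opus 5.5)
Your main line is the paper's own: the corollary is just the last clause of Proposition~\ref{prop:pde}, $\psi_{v(\cdot,\delta)}=\psi_{v_0}$ for every $\delta\in[0,\tilde\delta_0]$, which already says $\psi$ is constant along the solution. The time-shift step is harmless but redundant.

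The ``direct check'', however, is computed incorrectly, and the conclusion you draw from it is false.

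\textbf{The first step.} Differentiating $v(w(z,\delta),\delta)=z$ in $\delta$ and using the two identities you cite gives
\[
\partial_\delta w=-(\partial_\delta v)\circ w\cdot\partial_z w=-\ph(z,\delta)\,(\partial_x v)\circ w\cdot\partial_z w=-\ph(z,\delta).
\]
No factor $\partial_z w$ is left over.

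\textbf{The second step.} Even granting your formula, the second equality in your display fails, since $\partial_z\big(w(z)-w(-z)\big)=w'(z)+w'(-z)$, not $w'(z)-w'(-z)$.

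\textbf{The correct conclusion.} With the correct formula,
\[
\partial_\delta\big(w(z,\delta)-w(-z,\delta)\big)=-\ph(z,\delta)+\ph(-z,\delta)=0
\]
by evenness of $\ph$. So the direct computation does not merely produce a transport equation. It is by itself a complete proof of the corollary, valid wherever $\psi_{v(\cdot,\delta)}$ is defined. It is the same fact that the characteristic formula (\ref{implicit}) expresses: there, $v^{-1}(\cdot,\delta)$ and $v_0^{-1}$ differ by a multiple of the even function $\ph$.

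There is therefore no ``subtle point'' forcing you through the involution argument. That argument is the paper's route: invariance of $\calF_{inv}$ under (\ref{pde++}), plus the uniqueness in Proposition~\ref{prop:uni}. The inverse-function computation is a shorter, independent route that bypasses Proposition~\ref{prop:uni} altogether.

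Your proof stands because it rests on Proposition~\ref{prop:pde}. The claim that constancy cannot be obtained directly should be removed, and the check replaced by the correct computation above.
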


\begin{cor}
Since the function $v^{-1} - v^{-1}\circ(-\id)$ does not depend on $\delta$ on solutions of system (\ref{ave+}), Proposition \ref{prop:pde} implies the statement ``$\Rightarrow$'' of Theorem \ref{theo:iso}.
\end{cor}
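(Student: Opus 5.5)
The statement to prove is the direction ``$\Rightarrow$'' of Theorem \ref{theo:iso}: if $V_0=v_0^2/2$ and $V_1=v_1^2/2$ are isonormal, then $\psi_{v_0}=\psi_{v_1}$. A given canonical map conjugating the two Hamiltonians need not be the time-one shift of a flow of the form (\ref{hamF}) that keeps $H$ of the form (\ref{ham}). So I would not apply Proposition \ref{prop:pde} to that map directly. Instead I would reduce the claim to two facts about the Birkhoff normal form $N_v(I)$ of $y^2/2+v^2/2$:
\begin{itemize}
\item[(A)] $N_v$ depends only on $\psi_v$;
\item[(B)] the map $\psi_v\mapsto N_v$ is injective on $\calF_{odd *}$ (with $\psi'(0)=2$).
\end{itemize}
Isonormality means $N_{v_0}=N_{v_1}$, so (B) alone gives the claim. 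Proposition \ref{prop:pde} and Lemma \ref{lem:conver} supply (A), which is the converse direction, but (A) is also what I would use to prove (B) conveniently.

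For (A), let $v_0,v_1$ have $\psi_{v_0}=\psi_{v_1}=\psi$. By Proposition \ref{prop:uni}, $v_j=\psi^{-1}\circ(\id-h_j)$ with $h_j=h_{v_j}\in\calF_{inv}$. I would write $h_j=g_j^{-1}\circ(-\id)\circ g_j$, normalised so that $g_j'(0)=1$; for instance $g_j=(\id-h_j)/2$ works. Then set $g_\delta=(1-\delta)g_0+\delta g_1$, $h_\delta=g_\delta^{-1}\circ(-\id)\circ g_\delta$, and $v_\delta=\psi^{-1}\circ(\id-h_\delta)$.

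The key verification is that $\ph:=\partial_\delta v_\delta/\partial_x v_\delta$, expressed as a function of $v$, is even in $v$ and is $O(v^2)$. Then $v_\delta$ solves (\ref{pde}) with $f_1=\ph$. The identity needed is that $\partial_\delta h_\delta/(1-\partial_x h_\delta)$ is invariant under $x\mapsto h_\delta(x)$. This should follow by differentiating $h_\delta\circ h_\delta=\id$ in $\delta$: it is exactly the computation in the proof of Proposition \ref{prop:pde} read backwards. Since $v\circ h=-v$, this invariance is evenness in $v$.

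Next, (\ref{f_2k+1}) and Lemma \ref{lem:conver} produce a convergent generator $F=\sum_k y^{2k+1}f_{2k+1}$. Its flow (\ref{hamF}) conjugates $y^2/2+V_0$ to $y^2/2+V_1$, so $N_{v_0}=N_{v_1}$.

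For (B), I would use (A) to replace $v$ by the representative with $h=-\id$. That representative is the odd function $v_e=\psi^{-1}\circ(2\,\id)$, and it has the same $\psi$, so by (A) the same normal form. For an even potential I would compute $N$ order by order. The coefficient of $z^{2k+1}$ in $\psi$ enters the coefficient of $I^{k+1}$ in $N$ linearly with a nonzero universal factor (a Wallis-type integral), plus a polynomial in the lower coefficients. This triangular structure gives injectivity.

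The main obstacle I expect is making this triangularity rigorous without grinding through normalisation. My fallback is the period function. For even $V$, the period $T(E)$ determines $N$, since $N'(I)=2\pi/T$. The explicit Abel-type relation from the introduction, $x(v)-x(-v)=\frac1\pi\int_0^v T(\chi^2/2)\chi\,d\chi/\sqrt{v^2-\chi^2}$, shows $\psi$ is determined by $T$ and conversely, by inverting the Abel transform. This yields (B) directly.
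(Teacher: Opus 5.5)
Your argument is correct, but it does not follow the paper's route.

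\textbf{The paper's route.} The paper deduces ``$\Rightarrow$'' from Proposition \ref{prop:pde}. Along any flow (\ref{hamF}) that keeps the form $y^2/2+V$, regularity of the $f_{2k+1}$ forces $f_1$ to be even in $v$ (condition (\ref{partialf})). Hence $v$ evolves by (\ref{pde}) and $\psi_v$ is conserved. This argument is structural, and it is the one the author hopes to extend to several degrees of freedom. However, it tacitly needs two isonormal potentials to be joined by a family of such form-preserving flows, and the paper never establishes this. Your remark that a given conjugacy need not be of this kind points exactly at that step, and your route avoids it.

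\textbf{Your route.} What you really use is that the period function determines $\psi_v$. This is the quadrature argument from the introduction. The detour through (A) and the reduction to even potentials are unnecessary: (\ref{quad}) holds for arbitrary $V$. More directly, put $x=w(v)$ with $w=v^{-1}$, then $v=\sqrt{2E}\sin\theta$, and write $\psi_v(z)=\sum_k \psi_{2k+1}z^{2k+1}$. This gives
\[
T(E)=2\int_{-\pi/2}^{\pi/2} w'\big(\sqrt{2E}\sin\theta\big)\,d\theta
=\pi\sum_{k\ge 0}(2k+1)\frac{(2k-1)!!}{(2k)!!}\,2^k\,\psi_{2k+1}\,E^k .
\]
Only the odd part $\psi_v/2$ of $w$ survives the integration. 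So $T$ and $\psi_v$ determine each other diagonally, not merely triangularly, with nonzero Wallis factors. Moreover, $T$ and $N$ determine each other through $I(E)=\frac1{2\pi}\int_0^E T$ and $N=I^{-1}$. This proves ``$\Rightarrow$'' for all potentials without any normalization. If you identify isonormality with equality of normal forms, as the paper's definition does, it gives ``$\Leftarrow$'' as well.

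\textbf{Your part (A).} It is still a valid independent proof of ``$\Leftarrow$''. The invariance of $-\partial_\delta h_\delta/(1-\partial_x h_\delta)$ under $x\mapsto h_\delta(x)$ does follow from differentiating $h_\delta\circ h_\delta=\id$ in $\delta$ and using $(h_\delta'\circ h_\delta)\,h_\delta'=1$. It differs from Section 5 in how the flow is chosen:
\begin{itemize}
\item you use a $\delta$-dependent $\ph$ that joins any two potentials with equal $\psi$;
\item the paper uses an autonomous $\ph=\thet\circ\psi_{v_0}$ that carries $v_0$ to the odd representative.
\end{itemize}

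\textbf{Trade-off.} The paper's approach is built to generalize to several degrees of freedom but leaves the connectivity step open. Yours is complete, but it relies on explicit one-dimensional period integrals.
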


\section{Proof of the statement ``$\Leftarrow$'' of Theorem \ref{theo:iso}}

It is sufficient to show that any potential $V = v_0^2 / 2$ is isonormal to the potential
$$
  \hat V = \hat v^2 / 2, \qquad
  \hat v = \psi_{v_0}^{-1} \circ (2\id),
$$
in the other words, to the potential such that $\psi_{\hat v} = \psi_{v_0}$ and $h_{\hat v} = -\id$.

First we construct $\ph = \ph(v)\in\calF_{even}$ (independent of $\delta$) such that $v(x,\delta)$, the solution of (\ref{pde}) satisfies the equation $v(x,1) = \hat v$. Solution of the initial value problem (\ref{pde}) can be obtained by the method of characteristics. This solution is presented in the following somewhat implicit form:
\begin{equation}
\label{implicit}
  v(x,\delta) = v_0\big(x - \ph\circ v(x,\delta) \delta\big).
\end{equation}
Taking in (\ref{implicit}) $\delta=1$, we obtain
\begin{equation}
\label{hatv}
  \hat v = v_0\circ(\id - \ph\circ\hat v).
\end{equation}
Putting $\ph = \thet\circ\psi_{v_0}$, $h = h_{v_0}$ and using the equations
$$
  \psi_{v_0}\circ v_0 = \id - h, \quad
  \psi_{v_0}\circ\hat v = 2\id,
$$
we obtain from (\ref{hatv}):
$$
  2\id = (\id - h)\circ (\id - \thet\circ (2\id)).
$$
This equation impies
\begin{equation}
\label{thet}
  \thet = \frac12 \id - (\id - h)^{-1}.
\end{equation}

The next step is to prove that the function $\thet$ (equivalently, the function $\ph$) is even. On one hand (\ref{thet}) implies
\begin{equation}
\label{thet-id}
  \thet\circ(-\id) = - \frac12\id - (\id - h)^{-1}\circ (-\id).
\end{equation}
On the other hand, by (\ref{thet})
\begin{eqnarray*}
     \thet
 &=& \frac12\id - (h\circ h - h)^{-1}
  =  \frac12\id - ((h-\id)\circ h)^{-1} \\
 &=& \frac12\id - h\circ (h - \id)^{-1}
  =  \frac12\id - (h - \id)\circ (h - \id)^{-1} - (h - \id)^{-1} \\
 &=& - \frac12 \id - \big( (-\id)\circ(\id - h) \big)^{-1}.
\end{eqnarray*}
Therefore
$$
  \thet = - \frac12 \id - (\id - h)^{-1}\circ (-\id).
$$
Comparing this equation with (\ref{thet-id}), we obtain that $\thet\in\calF_{even}$ and (since $\psi_{v_0}\in\calF_{odd}$) $\ph\in\calF_{even}$.

Hence, the function $F$ (\ref{F}) is well-defined. Equations (\ref{hamF}) determine change of coordinates which transform the Hamiltonian $y^2/2 + v_0^2/2$ to $y^2/2 + \hat v^2/2$. \qed

\section{Involutions}
\label{sec:inv}

\begin{lem}
\label{lem:inv}
Let $h$ be a function from $\calF_*$. The following statements are equivalent.

(A) The function $h$ is an involution: $h\in\calF_{inv}$.

(B) For any $k\in\mN$
$$
  I_{2k+1} := \oint \frac{dz}{(z - h(z))^{2k+1}} = 0.
$$
The integrals are taken along small circles around the origin.

(C) $h = v^{-1}\circ (-\id)\circ v$ for some $v\in\calF_*$.
\end{lem}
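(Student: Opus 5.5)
The plan is to route everything through the single series $g=(\id-h)^{-1}$. We may assume $h_1=-1$ (this holds in (A) by definition of $\calF_{inv}$ and in (C) automatically); then $\id-h$ has linear coefficient $2$, so $g\in\calF_*$ exists and $g_1=\frac12$. The order will be: (C)$\Rightarrow$(A), then (A)$\Rightarrow$(C), then (B)$\Leftrightarrow$(A) through a parity property of $g$.

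\emph{(C)$\Rightarrow$(A).} This is immediate:
$$
h\circ h=v^{-1}\circ(-\id)\circ(-\id)\circ v=\id,
$$
and $h_1=-1$ because $h'(0)=v'(0)^{-1}\cdot(-1)\cdot v'(0)$.

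\emph{(A)$\Rightarrow$(C).} Set $v=\frac12(\id-h)$. Then $v_1=1$, so $v\in\calF_*$. Using $h\circ h=\id$,
$$
v\circ h=\tfrac12(h-\id)=-v .
$$
Hence $v\circ h=(-\id)\circ v$, i.e. $h=v^{-1}\circ(-\id)\circ v$.

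\emph{Computing $I_{2k+1}$.} Substitute $w=z-h(z)$, so $z=g(w)$; a small circle around $0$ maps to a small curve winding once around $0$. This gives
$$
I_{2k+1}=\oint\frac{g'(w)\,dw}{w^{2k+1}}=2\pi i\,[w^{2k}]\,g'(w)=2\pi i\,(2k+1)\,g_{2k+1}.
$$
Therefore (B) holds if and only if all odd coefficients of $g$ beyond the first vanish. Equivalently, $g=\frac12\id+e$ with $e\in\calF_{even}$, i.e. the series $\thet=\frac12\id-g$ of Section 5 is even.

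\emph{(A)$\Rightarrow$(B).} The computation in Section 5 already shows that $\thet$ is even whenever $h$ is an involution, and it uses nothing about $v_0$. I would re-cite it or repeat it in two lines.

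\emph{(B)$\Rightarrow$(A).} Assume $g=\frac12\id+e$ with $e$ even. Then
$$
g\circ(-\id)=-\tfrac12\id+e=g-\id .
$$
Evaluate at $w=z-h(z)$, so $g(w)=z$:
$$
g(-w)=z-w=h(z).
$$
Hence $h=g\circ(-\id)\circ g^{-1}$, which gives (C) with $v=g^{-1}=\id-h$, and hence (A).

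The main point needing care is the change of variables in the contour integral. One must check that $z\mapsto z-h(z)$ is a local biholomorphism sending the small circle to a closed curve winding once around the origin, so the residue formula applies. One must also check that the integrand has no singularity other than at $0$ inside the contour, which holds because $z-h(z)=2z+O(z^2)$ has an isolated zero at $0$. Everything else is formal manipulation in the group $(\calF_*,\circ)$.
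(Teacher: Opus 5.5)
Your argument is correct. The (B)$\Rightarrow$(A),(C) half is the paper's argument. The paper's $\zeta/2+\ph(\zeta)$ is your $g=(\id-h)^{-1}$, its requirement that $\ph$ be even is your condition $g_{2k+1}=0$, and its $v$ (defined by $v^{-1}=\zeta/2+\ph$) is your $v=g^{-1}=\id-h$.

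The forward direction is organized differently. The paper never proves (A)$\Rightarrow$(C) directly; it goes (A)$\Rightarrow$(B)$\Rightarrow$(C). It obtains (A)$\Rightarrow$(B) inside the integral by substituting $z=h(w)$, which gives $I_{2k+1}=-\oint h'(z)\,dz/(z-h(z))^{2k+1}$. Hence $2I_{2k+1}$ is the integral of the exact form $(1-h')\,dz/(z-h)^{2k+1}=d\big((z-h)^{-2k}\big)/(-2k)$. That argument is self-contained and never needs the inverse series $g$.

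You instead evaluate the integrals once, $I_{2k+1}=2\pi i(2k+1)g_{2k+1}$. This turns (B) into the purely algebraic condition that the odd part of $g$ is $\frac12\id$, and everything else becomes manipulation in $(\calF_*,\circ)$. That makes the three equivalences more transparent and shows the link with $\thet=\frac12\id-g$ in Section~5.

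You do not actually need to cite Section~5. Your own identity $(\id-h)\circ h=(-\id)\circ(\id-h)$, inverted, gives $h\circ g=g\circ(-\id)$. Hence $g(w)-g(-w)=(\id-h)(g(w))=w$, which is (A)$\Rightarrow$(B) in one line.

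One wording correction: $h_1=-1$ is not a without-loss-of-generality assumption when you start from (B). For $h=c\,\id$ with $c\ne\pm1$, all $I_{2k+1}$ vanish but $h$ is not an involution. So $h_1=-1$ must be a standing hypothesis of the lemma. The paper uses it tacitly as well, when it writes $z=\zeta/2+\ph(\zeta)$.
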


{\it Proof}. First we prove ``$(A) \Rightarrow (B)$''. Consider change of the variable $\zeta = h(w)$. Then
(since $h\circ h(w) = w$)
$$
  I_{2k+1} = \oint\frac{h'(w)\, dw}{(h(w) - w)^{2k+1}}
           = - \oint\frac{h'(z)\, dz}{(z - h(z))^{2k+1}}.
$$
Hence $2I_{2k+1}$ equals the integral
$$
  \oint\frac{(z - h(z))'\, dz}{(z - h(z))^{2k+1}} = 0 \quad
  \mbox{for all $k\in\mN$}.
$$

Now we prove ``(B) $\Rightarrow$ (A)''. Consider change of the variable $\zeta = z - h(z)$. Then
\begin{equation}
\label{ph}
  z = \zeta / 2 + \ph(\zeta), \qquad
  \ph = O(\zeta^2)
\end{equation}
and the condition $I_3 = I_5 = \ldots = 0$ takes the form
$$
  \oint\frac{1/2 + \ph'(\zeta)}{\zeta^{2k+1}}\, d\zeta = 0, \qquad k\in\mN.
$$
This means that $\ph\in\calF_{even}$. We obtain the equation
$$
  z - h(z) = \frac\zeta 2 + \ph(\zeta) - h\Big(\frac\zeta 2 + \ph(\zeta)\Big) = \zeta,
$$
so that
\begin{equation}
\label{zeta+ph}
  h(\zeta/2 + \ph(\zeta)) = - \zeta/2 + \ph(\zeta).
\end{equation}
Replace $\zeta$ by $-\zeta$. Since $\ph$ is even, we have:
$$
  h(-\zeta/2 + \ph(\zeta)) = \zeta/2 + \ph(\zeta).
$$
These two equations inply that for any $x$ from a neighborhood of the origin\footnote
{we may take $x = \zeta/2 + \ph(\zeta)$}
$h\circ h(x) = x$.

The implication ``(C) $\Rightarrow$ (A)'' is trivial. To prove ``(B) $\Rightarrow$ (C)'', we define $v\in\calF_*$ by the equation $v^{-1}(\zeta) = \zeta/2 + \ph(\zeta)$, see (\ref{ph}). The function $\ph$ is even. Then by (\ref{zeta+ph})
$h\circ v^{-1} = v^{-1}\circ (-\id)$. This implies $h = v^{-1}\circ (-\id)\circ v$. \qed

{\bf Proof of Proposition \ref{prop:uni}}. For any $v\in\calF$ we prove that $\psi\in\calF_{odd}$ such that $\id - \psi\circ v\in\calF_{inv}$ is unique. Since $(\id - \psi\circ v)'(0) = -1$, we have: $\psi'(0) = 2/v_1$, where as usual $v_1 = v'(0)$. By Lemma \ref{lem:inv} $\psi$ satisfies the equations
\begin{equation}
\label{int1}
   \oint\frac{dz}{(\psi\circ v(z))^{2k+1}} = 0 \quad \mbox{for all } k\in\mN.
\end{equation}
We perform change of the variable
$$
  \zeta = v(z), \quad
  z = w(\zeta), \qquad
  w = v^{-1}, \quad w(0) = 0, \quad w'(0) = 1/v_1.
$$
Equations (\ref{int1}) take the form
\begin{equation}
\label{int2}
   \oint\frac{w'(\zeta)\, d\zeta}{(\psi(\zeta))^{2k+1}} = 0, \qquad k\in\mN.
\end{equation}
We present $\psi$ in the form
$$
  \psi(\zeta) = \frac{\zeta}{\ph(\zeta)}, \qquad
  \ph(0) = \frac1{\psi'(0)} = \frac{v_1}{2}, \quad
  \ph - \ph(0)\in\calF_{even}.
$$
Then conditions (\ref{int2}) can be rewritten in the form:
$$
  \oint\frac{\ph^{2k+1}(\zeta) w'(\zeta)\, d\zeta}{\zeta^{2k+1}} = 0, \qquad k\in\mN
$$
or equivalently, $c_{2k}$, the coefficient at $\zeta^{2k}$ in the Taylor expansion of the function $\ph^{2k+1}(\zeta) w'(\zeta)$ vanishes.

Let
$$
  w(\zeta) = \frac\zeta{v_1} + w_3\zeta^3 + w_5\zeta^5 + \ldots, \quad
  \ph(\zeta) = \frac{v_1}2 + \ph_2\zeta^2 + \ph_4\zeta^4 + \ldots
$$
Then
\begin{equation}
\label{c}
     c_{2k}
  =  \frac{(2k+1)}{v_1} \Big(\frac{v_1}2\Big)^{2k} \ph_{2k} + Q_{2k}(\ph_2,\ph_4,\ldots,\ph_{2k-2}),
\end{equation}
where $Q_{2k}$ is a polynomial with coefficients, depending on ``known quantities'' $v_1,w_3,w_5,\ldots$ Hence the equations $c_{2k}=0$ uniquely determine all the coefficients $\ph_2,\ph_4,\ph_6,\ldots$ from recursive equations (\ref{c}).

We obtain that $\psi$ (and therefore, $h$) are determined by (\ref{uni}) uniquely. \qed
\medskip

{\bf Acknowledgement}. The author thanks Sergey Bolotin for useful discussions and especially for indication to reference \cite{LL} and equation (\ref{quad}).

\end{document}